\def\eqalign#1{\null\,\vcenter{\openup\jot \mathsurround=0pt \ialign{\strut
     \hfil$\displaystyle{##}$&$ \displaystyle{{}##}$\hfil \crcr#1\crcr}}\,}
\newtheorem{proposition}{Proposition}
\newtheorem{corollary}[proposition]{Corollary}
\def\R{\mathbb{R}}
\begin{document}

\title{Geometric properties of Kahan's method}

\author{Elena Celledoni$^1$, Robert I McLachlan$^2$, Brynjulf Owren$^1$ and G R W Quispel$^3$}

\address{$^1$ 	Department of Mathematical Sciences,
	NTNU,
	7491 Trondheim,
	Norway\eads{\mailto{elenac@math.ntnu.no}, \mailto{bryn@math.ntnu.no}}}
\address{$^2$ 	Institute of Fundamental Sciences,
	Massey University,
	Private Bag 11 222, Palmerston North 4442, New Zealand\ead{r.mclachlan@massey.ac.nz}}
\address{$^3$ 	Department of Mathematics,
	La Trobe University,
	Bundoora, VIC 3083, Australia\ead{r.quispel@latrobe.edu.au}}

\begin{abstract}
\noindent
We show that Kahan's discretization of quadratic vector fields is equivalent to a Runge--Kutta method. When the vector field is Hamiltonian on either a symplectic vector space or a Poisson vector space with constant Poisson structure, the map determined by this discretization has a conserved modified Hamiltonian and an invariant measure, a combination previously unknown amongst Runge--Kutta methods applied to nonlinear vector fields. This produces large classes of integrable rational mappings in two and three dimensions, explaining some of the integrable cases that were previously known.
\end{abstract}

\section{Introduction: Kahan's method for quadratic vector fields}
Consider a system of differential equations arising from a quadratic
vector field
\begin{equation}
\dot x = f(x) := Q(x)+Bx+c,\quad x\in\R^n,
\label{eq:quadvf}
\end{equation}
where $Q$ is an $\R^n$-valued quadratic form, $B\in\R^{n\times n}$, and $c\in\R^n$.
Consider the numerical integration method $x\mapsto x'$ with step size $h$ given by
\begin{equation}
\frac{x'-x}{h} = Q(x,x') + \frac{1}{2}B(x+x')+ c
\label{eq:kahanmap}
\end{equation}
where
\begin{equation}
Q(x,x') = \frac{1}{2}\left( Q(x+x') - Q(x) - Q(x')\right)
\end{equation}
is the symmetric bilinear form obtained from the quadratic form $Q$ by polarisation. We call (\ref{eq:kahanmap}) the {\em Kahan method}.
It is symmetric (i.e., self-adjoint), and, crucially, it is only linearly implicit, that is, 
$x'$ can be computed by solving a single linear system (because the right 
hand side of  (\ref{eq:kahanmap}) is linear in $x'$). The method (\ref{eq:kahanmap}) was introduced by Kahan in \cite{kahan} for two examples, a scalar Riccati equation and a 2-dimensional Lotka--Volterra system (\cite{kahan}, p. 14) and written down in the general form (\ref{eq:kahanmap}) in \cite{ka-li} (see also references therein).

Because of the different treatment of each term and the unusual treatment of the quadratic term, Kahan called (\ref{eq:kahanmap}) an `unconventional' method.

The map obtained from applying the Kahan method to various quadratic
vector fields $f$ has been shown to be completely integrable in a number of
cases (see \cite{hi-ki}, \cite{ho-pe}, \cite{pe-pf-su}, and references therein).  In most cases, the conserved 
quantities depend on the step size $h$. At
present there is no single `integrability mechanism' known which accounts for all integrable cases.

In this paper we show that the Kahan method is a Runge--Kutta method. As such it shares a number of features with all Runge--Kutta methods: it has  a B-series, it is affine covariant, and it preserves all affine symmetries and all linear integrals of $f$ automatically. As a symmetric linear method it preserves all affine reversing symmetries of $f$ automatically, and the B-series of its modified vector field contains only even powers of $h$.

We then consider the case that $f$ is a Hamiltonian vector field on either a symplectic vector space or a Poisson vector space with constant Poisson structure, in any dimension $n$.
 We show that in this case the Kahan map has a conserved quantity that converges to the Hamiltonian of the vector field as $h\to 0$. It also has a conserved measure which converges to the Euclidean measure as $h\to 0$. These general properties explain some of the integrable cases considered in \cite{pe-pf-su}.

\section{Kahan's method as a Runge--Kutta method}
\begin{proposition}
The Kahan method coincides with the Runge--Kutta method
\begin{equation}
\label{eq:kahanrk}
\frac{x'-x}{h} = -\frac{1}{2}f(x) + 2 f\Big(\frac{x+x'}{2}\Big) - \frac{1}{2}f(x')
\end{equation}
restricted to quadratic vector fields.
\end{proposition}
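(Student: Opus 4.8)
The plan is to substitute the quadratic vector field $f(x)=Q(x)+Bx+c$ directly into the right-hand side of (\ref{eq:kahanrk}) and to check term by term that it reduces to the right-hand side of (\ref{eq:kahanmap}). Since the operation $f\mapsto -\frac{1}{2}f(x)+2f\big(\frac{x+x'}{2}\big)-\frac{1}{2}f(x')$ is linear in $f$, I would handle the quadratic part $Q$, the linear part $Bx$, and the constant part $c$ separately and then add the three results.

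The quadratic part is where the only substantive step occurs. Because $Q$ is a quadratic form it is homogeneous of degree two, so $Q\big(\frac{x+x'}{2}\big)=\frac{1}{4}Q(x+x')$. Substituting this gives
\begin{equation}
-\frac{1}{2}Q(x)+2\cdot\frac{1}{4}Q(x+x')-\frac{1}{2}Q(x')=\frac{1}{2}\big(Q(x+x')-Q(x)-Q(x')\big),
\end{equation}
which is exactly the polarised form $Q(x,x')$ appearing in (\ref{eq:kahanmap}). This is the heart of the argument: the Runge--Kutta weights $-\frac{1}{2},2,-\frac{1}{2}$ are precisely those that convert three evaluations of a homogeneous quadratic at $x$, the midpoint, and $x'$ into the polarisation.

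For the linear and constant parts the computation is routine. The same combination applied to $Bx$ gives $-\frac{1}{2}Bx+2\cdot\frac{1}{2}B(x+x')-\frac{1}{2}Bx'=\frac{1}{2}B(x+x')$, and applied to $c$ it gives $\big(-\frac{1}{2}+2-\frac{1}{2}\big)c=c$. Adding the three contributions reproduces $Q(x,x')+\frac{1}{2}B(x+x')+c$, so the right-hand sides of (\ref{eq:kahanrk}) and (\ref{eq:kahanmap}) coincide. I do not expect a genuine obstacle here; the only point needing care is the degree-two homogeneity identity for $Q$, and it is exactly this identity that confines the equivalence to quadratic vector fields, since for a field of higher degree the midpoint evaluation would no longer collapse onto the polarisation.
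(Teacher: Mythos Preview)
Your proof is correct and is essentially the paper's own argument run in the reverse direction: the paper starts from the Kahan right-hand side and rewrites it into the Runge--Kutta form, using the same degree-two homogeneity identity $Q\!\big(\tfrac{x+x'}{2}\big)=\tfrac14 Q(x+x')$ (equivalently $Q(x+x')=4Q\!\big(\tfrac{x+x'}{2}\big)$) and the same bookkeeping for the linear and constant parts. Your explicit use of linearity in $f$ to separate the three contributions is a clean way to organise the same computation.
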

\begin{proof}
We have
\begin{equation}
\eqalign{\frac{x'-x}{h} &=
Q(x,x') + \frac{1}{2}B(x+x')+ c \cr
&= \frac{1}{2}\left(Q(x+x') - Q(x) - Q(x')\right) + 
\frac{1}{2}B(x+x') + c \cr
&= \frac{1}{2}\Big(4 Q\Big(\frac{x+x'}{2}\Big) - Q(x) - Q(x')\Big)
- \frac{1}{2}B x + 2 B\frac{x+x'}{2} - \frac{1}{2} B x' + c \cr
&= -\frac{1}{2}f(x) + 2 f\Big(\frac{x+x'}{2}\Big)
 - \frac{1}{2}f(x').}
\end{equation}
\end{proof}
(Many other Runge--Kutta methods also coincide with the Kahan method when restricted to quadratic vector fields. In this paper, we restrict our attention to (\ref{eq:kahanrk}).)
As already noted by Kahan \cite{ka-li}, the Kahan method also coincides
with a certain Rosenbrock method on quadratic vector fields, for expanding
in Taylor series about $x$ gives
\begin{equation}
\eqalign{\frac{x'-x}{h} 
&= -\frac{1}{2}f(x) + 2 f\Big(\frac{x+x'}{2}\Big) - \frac{1}{2}f(x')\cr
&= -\frac{1}{2}f(x) + 2\Big(f(x) + \frac{1}{2}f'(x)(x'-x) + \frac{1}{8}f''(x)(x'-x,x'-x)\Big)\cr
&\hspace{1.6cm}
-\frac{1}{2}\Big(f(x) + f'(x)(x'-x) + \frac{1}{2}f''(x)(x'-x,x'-x)\Big) \cr
&= f(x) + \frac{1}{2}f'(x)(x'-x) \cr
}
\end{equation}
so
\begin{equation}
\label{eq:rosenbrock}
\frac{x' - x}{h} = \Big(I - \frac{h}{2}f'(x)\Big)^{-1}f(x).
\end{equation}
From the symmetry of the method, or by expanding instead
around $x'$, Kahan's method can also be written
\begin{equation}
\label{eq:rosenbrock2}
\frac{x' - x}{h} = \Big(I + \frac{h}{2}f'(x')\Big)^{-1}f(x').
\end{equation}

The B-series of this method is
$$ x' = x + \sum_{k=0}^\infty \frac{h^{k+1}}{2^k}f'(x)^k f(x),$$
that is, it contains only tall trees. (For nonquadratic vector fields,
the methods (\ref{eq:rosenbrock}) and (\ref{eq:kahanrk}) are not
necessarily equivalent.)

The Runge--Kutta method (\ref{eq:kahanrk}) has 3 stages and Butcher
tableau
\begin{center}
\begin{tabular}{c|ccc}
0 & 0 & 0 & 0 \\[1mm]
$\frac{1}{2}$ & $-\frac{1}{4}$& 1 & $-\frac{1}{4}$\\[1mm]
1 & $-\frac{1}{2}$ & 2 & $-\frac{1}{2}$ \\[2mm]
\hline\\[-3mm]
& $-\frac{1}{2}$ & 2 & $-\frac{1}{2}$ \\
\end{tabular}
\end{center}
The modified vector field of the Kahan method applied to quadratic vector fields can be calculated using standard methods \cite{ha-lu-wa}. Its first few terms are 
$$\eqalign{
f +&  \frac{h^2}{12}\left(-2 f''(f,f) + f'f'f)\right) 
+ \frac{h^4}{240} \Big(3 f'^4 f - 2 f'^2f''(f,f)  \cr
& - 6 f'f''(f,f'f)-8 f''(f,f'^2f)+12 f''(f,f''(f,f)) + 4 f''(f'f,f'f)\Big)
+\dots.}$$
A calculation using conjugation by B-series, considering only quadratic vector fields, now yields the following result. We omit the details.

\begin{proposition}
\label{prop2}
Kahan's method applied to general quadratic fields has order 2 and
is conjugate to symplectic up to order 4. It is not conjugate by B-series to a method
of order greater than 2 or conjugate-symplectic by B-series to order higher than 4.
\end{proposition}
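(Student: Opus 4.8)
The plan is to work entirely at the level of B-series, comparing the modified vector field of the method both with the class of Hamiltonian B-series and with the B-series reachable by a near-identity change of variables. The decisive simplification is that for a quadratic field $f$ one has $f''=\mathrm{const}$ and $f^{(k)}=0$ for $k\ge 3$, so every elementary differential attached to a rooted tree having a node of out-degree $\ge 3$ vanishes. Thus only the \emph{binary} rooted trees (out-degree $\le 2$) contribute, and all four assertions reduce to finite linear-algebra computations on the spaces spanned by the corresponding elementary differentials, order by order in $h$. This is exactly what ``considering only quadratic vector fields'' buys us.

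First, the order. The modified vector field displayed above begins $f+\frac{h^2}{12}\bigl(-2f''(f,f)+f'f'f\bigr)+O(h^4)$, so it suffices to exhibit one quadratic field for which the $h^2$ term is nonzero. Taking $f(x,y)=(y^2,x^2)$ gives $f''(f,f)=(2x^4,2y^4)$ and $f'f'f=(4xy^3,4x^3y)$, whose combination does not vanish; hence the method has order exactly $2$ (and, being symmetric, carries no odd-order error terms).

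Next I would treat the two negative statements together, since they rest on the same mechanism. Conjugating the modified field $\tilde f=f+h^2f_2+h^4f_4+\cdots$ by $\chi=\exp(h^2Y_2+h^4Y_4+\cdots)$ alters the $h^2$ term only through $f_2\mapsto f_2+[Y_2,f]$, and the unique binary tree on two nodes forces $Y_2=\alpha\,f'f$; a direct computation gives $[f,f'f]=f''(f,f)$, so the image of the conjugation at this order is exactly $\mathrm{span}\{f''(f,f)\}$. Since $f_2$ also contains an independent $f'f'f$ contribution, it cannot be removed, which proves the method is \emph{not} conjugate (by B-series) to a method of order $>2$. For conjugate-symplecticity I would use the characterisation of Hamiltonian B-series by a divergence condition on binary trees: at three nodes the differentials $f''(f,f)$ and $f'f'f$ span a two-dimensional space on which being Hamiltonian is a single linear condition (for $f=J\nabla H$ a short computation shows $a\,f''(f,f)+b\,f'f'f$ is divergence-free precisely when $b=-2a$). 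The free parameter $\alpha$ adjusts exactly the $f''(f,f)$-coefficient, so $Y_2$ can be chosen to make the $h^2$ term Hamiltonian; as the $h^3$ term vanishes by symmetry, the conjugated map is symplectic up to order $4$.

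The crux is the final, negative claim, that no conjugacy achieves symplecticity beyond order $4$; equivalently, after $Y_2$ has been fixed, the $h^4$ term of the conjugated field cannot be rendered Hamiltonian. I would enumerate the binary rooted trees on four nodes (there are three, supplying the free coefficients of $Y_4$) and on five nodes (carrying the five-node elementary differentials), compute the five-node content of
\[
f_4+[Y_2,f_2]+\frac12\bigl[Y_2,[Y_2,f]\bigr]+[Y_4,f],
\]
and impose the Hamiltonicity (divergence) conditions on this combination. The assertion is that the resulting linear system in the three coefficients of $Y_4$ is inconsistent: just as at order $2$, $f_4$ has a component transverse both to the image of $\mathrm{ad}_f$ on binary four-node fields and to the Hamiltonian five-node differentials. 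The main obstacle is precisely this bookkeeping --- listing the binary trees, evaluating all the nested brackets as explicit elementary differentials, and, most delicately, tracking the linear relations that hold among five-node differentials \emph{only} because $f$ is quadratic, so that one can be sure the surviving obstruction is genuine and not an artefact of over-counting the conditions.
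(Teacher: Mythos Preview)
Your plan coincides with the paper's own treatment: the paper states only that the result follows from ``a calculation using conjugation by B-series, considering only quadratic vector fields'' and then explicitly omits the details, so your outline in fact supplies more than the paper does. The reduction to binary trees, the identification of $\mathrm{im}\,\mathrm{ad}_f$ at order $h^2$ via $[f,f'f]=\pm f''(f,f)$, and the dimension count you propose at order $h^4$ are exactly the ingredients such a computation requires (one caution: what you call a ``divergence condition'' is, in dimension $>2$, really the Hamiltonian B-series condition $b(u\circ v)+b(v\circ u)=0$ of Hairer--Lubich--Wanner, but with the usual symmetry factors this gives the same single linear relation you quote at three nodes).
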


\section{Conservative properties of Kahan's method}

We now consider the conservative properties of the Kahan method in the
case of canonical Hamiltonian systems $\dot x = J^{-1} \nabla H(x)$
where $H:\R^n\to \R$ is the Hamiltonian or energy of the system.
First, note that the method (\ref{eq:kahanrk}) is the $a=-1/2$ member of the
class of Runge--Kutta methods
\begin{equation}
\label{eq:symrk}
\frac{x'-x}{h} = a f(x) + (1-2a) f\Big(\frac{x+x'}{2}\Big) + a f(x').
\end{equation}
These are all symmetric, A-stable, and second order. Some other members
of this family are also known to have conservative properties:
\begin{enumerate}
\item When $a=0$, we have the midpoint rule. It is symplectic for canonical Hamiltonian systems. Because it is symplectic, it conserves the Euclidean
measure. When the Hamiltonian $H$ is analytic,
the method has a formal invariant $\widetilde H = H+ \sum_{k=1}^\infty h^k H_k$. When
$H$ is quadratic (i.e. when $f$ is linear) this series converges
to give a conserved quantity of the method. 
\item When $a=1/2$, we have the trapezoidal rule. It is conjugate
to the midpoint rule (the conjugacy being half an Euler step), and 
so it is also conjugate to symplectic and hence conserves a measure
close to the Euclidean measure, and it also has a formal invariant close to $H$.
\item When $a=1/6$, we have `Simpson's method' \cite{cetal}, so-called because
the right hand side of (\ref{eq:symrk}) is Simpson's quadrature
of $A=\int_0^1 f(\xi x + (1-\xi)x')\, d\xi$, appearing in the average vector field method $\frac{x'-x}{h} = A$, which conserves the Hamiltonian
in canonical Hamiltonian systems. Simpson's method preserves
quartic Hamiltonians exactly because it coincides with the average vector field method in that case. It is not conjugate to symplectic in the
sense of B-series \cite{ha-lu-wa}.
\end{enumerate}

\begin{proposition}
Kahan's method has a conserved quantity given by the modified Hamiltonian
\begin{equation}
\label{eq:Ht}
\widetilde H(x) := H(x) + \frac{1}{3}h \nabla H(x)^T \Big( I - \frac{1}{2}h f'(x)\Big)^{-1}f(x) 
\end{equation}
for all cubic Hamiltonian systems on symplectic vector spaces
and on all Poisson vector spaces with constant Poisson structure.
The modified Hamiltonian is (i) a rational function of $x$; (ii) an even function of $h$; and (iii)
given by a convergent series of elementary Hamiltonians containing only even-order tall 
trees.
\end{proposition}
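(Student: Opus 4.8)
The plan is to treat the three listed properties as structural consequences of two facts: that $f$ is quadratic (so every Taylor expansion of $H$ and of $\nabla H$ terminates), and that $f = K\nabla H$ with $K$ a \emph{constant antisymmetric} matrix ($K=J^{-1}$ in the symplectic case, a constant Poisson matrix, possibly singular, otherwise). Throughout I write $g:=\nabla H$, so $f=Kg$ and $f'=KS$ with $S:=\nabla^2 H$ symmetric, and I abbreviate the increment $\delta:=(x'-x)/h$. The key opening observation is that $g^T f = g^T K g = 0$ by antisymmetry of $K$, which is exactly what makes the $O(h)$ term of (\ref{eq:Ht}) vanish.

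First I would dispose of (i) and (ii). Property (i) is immediate: the entries of $f'(x)$ are affine in $x$, so by Cramer's rule $(I-\frac{1}{2}hf'(x))^{-1}$ has rational entries, whence (\ref{eq:Ht}) is rational in $x$. For (ii) I would compare (\ref{eq:Ht}) with its image under $h\mapsto -h$; their difference is $\frac{1}{3}h\,g^T\big[(I-\frac{h}{2}f')^{-1}+(I+\frac{h}{2}f')^{-1}\big]f = \frac{2}{3}h\,g^T\big(I-\frac{h^2}{4}(f')^2\big)^{-1}Kg$, and I would show that each term $g^T(f')^{2k}Kg$ of its Neumann series vanishes. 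This is because $(f')^{2k}K=(KS)^{2k}K$ is antisymmetric: transposing gives $-K(SK)^{2k}$, and the regrouping identity $K(SK)^{2k}=(KS)^{2k}K$ identifies this with $-(f')^{2k}K$. Hence the two forms of $\widetilde H$ agree exactly, which is (ii). Substituting the same Neumann series into (\ref{eq:Ht}) gives $\widetilde H = H + \frac{1}{3}\sum_{k\ge 0}2^{-k}h^{k+1}g^T(f')^kf$, in which the even powers of $f'$ drop out by the same antisymmetry argument, leaving a convergent series over the odd $k$, i.e. over even powers of $h$ (even-order tall trees), which is (iii).

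The heart of the matter is the exact conservation $\widetilde H(x')=\widetilde H(x)$. Using the Rosenbrock form (\ref{eq:rosenbrock}), $(I-\frac{h}{2}f'(x))^{-1}f(x)=\delta$, so (\ref{eq:Ht}) collapses to $\widetilde H(x)=H(x)+\frac{1}{3}g(x)^T(x'-x)$. Evaluating instead the $h\mapsto -h$ form of (\ref{eq:Ht}) at $x'$ and using the backward form (\ref{eq:rosenbrock2}), $(I+\frac{h}{2}f'(x'))^{-1}f(x')=\delta$, gives $\widetilde H(x')=H(x')-\frac{1}{3}g(x')^T(x'-x)$. Conservation is therefore equivalent to the trapezoidal-type identity
\[
H(x')-H(x)=\tfrac{1}{3}\big(g(x')+g(x)\big)^T(x'-x).
\]
Writing $m:=\frac{1}{2}(x+x')$ and $t:=\frac{1}{2}(x'-x)$ and expanding $H$ and $g$ about $m$ (the series terminate, as $H$ is cubic) reduces this to the scalar identity $2\,g(m)^T t = H'''(t,t,t)$, where $H'''$ is the constant third-derivative form. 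Finally I would feed in the midpoint form (\ref{eq:kahanrk}): since $-\frac{1}{2}g(x)+2g(m)-\frac{1}{2}g(x')=g(m)-\frac{1}{2}H'''(t,t)$, it yields $t=\frac{h}{2}K\big(g(m)-\frac{1}{2}H'''(t,t)\big)$. Substituting this expression for $t$ into both sides, each reduces to $-\frac{h}{2}g(m)^TKH'''(t,t)$ once the terms $g(m)^TKg(m)$ and $H'''(t,t)^TKH'''(t,t)$ are dropped by antisymmetry of $K$; this verifies the identity. Crucially $K^{-1}$ never appears, so the argument covers the degenerate Poisson case verbatim.

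I expect the main obstacle to be locating this exact identity rather than checking it. The decisive move is to realise that, thanks to the evenness (ii), one may evaluate $\widetilde H$ at $x$ and at $x'$ against the \emph{same} increment $\delta$, by using the forward Rosenbrock form at $x$ and the backward form at $x'$; this is what turns conservation into the clean identity above. The remaining work is bookkeeping — keeping the terminating cubic Taylor expansions straight and applying antisymmetry repeatedly — together with the minor point that the term-by-term vanishing used in (ii) and (iii) holds as a rational identity (valid wherever the relevant inverse exists, by analytic continuation from the region where the Neumann series converges), not merely formally.
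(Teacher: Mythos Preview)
Your proof is correct, and it takes a genuinely different route from the paper's. The paper works with the symmetric trilinear form $C$ (so $H(x)=C(x,x,x)$ in the homogeneous case), first extracts from the whole family (\ref{eq:symrk}) the identity $C(x,x,x')=C(x,x',x')$ at $a=-\tfrac12$, and then proves conservation of $C(x,x,x')$ by a \emph{two-step} telescoping argument: $C(x',x',x'')-C(x,x,x')=C(x',x',x''-x)$, and $x''-x=2h\bigl(I-\tfrac{h^2}{4}f'(x')^2\bigr)^{-1}f(x')$ is annihilated term-by-term by $\nabla H(x')^T$ via the same antisymmetry of $(KS)^{2k}K$ you use. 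The nonhomogeneous case is then obtained by homogenisation with an extra variable $x_0$. By contrast, you prove evenness (ii) first and use it, together with the forward/backward Rosenbrock forms, to write $\widetilde H(x)$ and $\widetilde H(x')$ against the \emph{same} increment $x'-x$; conservation then becomes the single scalar identity $H(x')-H(x)=\tfrac13(g(x)+g(x'))^T(x'-x)$, which a midpoint Taylor expansion reduces to $2g(m)^T t=H'''(t,t,t)$ and which one substitution of the Kahan update verifies. Your organisation avoids both the two-step trick and the homogenisation, and treats the general cubic in one stroke; the paper's route, on the other hand, makes visible the trilinear invariant $C(x,x,x')$ and its symmetric form $C(x,\tfrac{x+x'}{2},x')$, and situates Kahan's case within the family (\ref{eq:symrk}), recovering Simpson's exact-energy case $a=\tfrac16$ along the way.
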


\begin{proof}
We first consider the homogeneous case, i.e., we let $f 
= K \nabla H(x)$ where $K$ is an arbitrary (not necessarily invertible) constant antisymmetric matrix
and $H(x) = C(x,x,x)$ where $C$ is a symmetric trilinear form.
Note that $\nabla H(x)^T v = 3 C(x,x,v)$ for all $x$, $v\in\R^n$.
For any of the methods (\ref{eq:symrk}), we have (writing
$\bar x = (x+x')/2$)
$$\eqalign{
0 &= h \left(a \nabla H(x) + (1-2a) \nabla H(\bar x) + a\nabla H(x')\right)^T
K^T \left(a \nabla H(x) + (1-2a) \nabla H(\bar x) + a\nabla H(x')\right) \cr
&=h \left(a f(x) + (1-2a) f(\bar x) + a f(x')\right)^T
\left(a \nabla H(x) + (1-2a) \nabla H(\bar x) + a\nabla H(x')\right) \cr
&= (x'-x)^T(a \nabla H(x) + (1-2a) \nabla H(\bar x) + a \nabla H(x') )\cr
&= 3a C(x,x,x'-x) + \frac{3}{4}(1-2a) C(x+x',x+x',x'-x) + 3a C(x',x',x'-x) \cr
&= \frac{3}{4}\left( (2a+1)(C(x',x',x')-C(x,x,x)) + (6 a -1 )(C(x,x,x')-C(x,x',x'))\right). \cr
}$$

The case $a=\frac{1}{6}$ is Simpson's method, confirming that $H(x)$ is conserved in that case.

For Kahan's method, $a=-\frac{1}{2}$,  and we have from Eqs. 
(\ref{eq:rosenbrock},\ref{eq:rosenbrock2}) that
$$ \eqalign{
x'' - x &= h \Big((I + \frac{1}{2}h f'(x'))^{-1} + (I-\frac{1}{2}h f'(x'))^{-1}\Big)f(x') \cr
&= 2 h \Big(I - \frac{1}{4}h^2 f'(x')^2\Big)^{-1}f(x'). \cr
}$$
Therefore
\begin{equation}
\label{eq:Cs}
\eqalign{
C(x',x',x'') - C(x,x,x') &= C(x',x',x'') - C(x',x',x) \cr
&= C(x',x',x''-x) \cr
&= \frac{1}{3}\nabla H(x')^T (x''-x) \cr
&= \frac{1}{3}\nabla H(x') ^T 2 h\Big(I- \frac{1}{4}h^2 f'(x')^2\Big)^{-1}f(x') \cr
& = \frac{2}{3} h \nabla H(x')^T \Big(\sum_{n=0}^\infty (h f'(x')/2)^{2n}\Big)f(x')\cr
&= \frac{2}{3}h \sum_{n=0}^\infty \nabla H(x')^T\left[(K H''(x'))^{2n} K\right]\nabla H(x') \cr
& = 0. 
}
\end{equation}
because each matrix in square brackets is antisymmetric. (Each term is an elementary Hamiltonian corresponding 
to a superfluous tall tree.) The expression is a rational function of $x$ and $h$ so if its Taylor series in $h$ is zero, the function is zero. Therefore the  Kahan method has a
first integral $C(x,x,x')$. This can also be written in the  
symmetric form
$$C(x,x,x') = (C(x,x,x') + C(x,x',x'))/2 = C(x,(x+x')/2,x')$$
or explicitly as a function of $x$ as
$$ \eqalign{
C(x,x,x') &= C(x,x,x + h \Big(I-\frac{1}{2}h f'(x)\Big)^{-1}f(x))\cr
&= H(x) + \frac{1}{3}h \nabla H(x)^T \Big(I - \frac{1}{2}h f'(x)\Big)^{-1}f(x) \cr
&= \widetilde H(x).}$$
As in  (\ref{eq:Cs}), $\nabla H(x)^T f'(x)^{2n}f(x) = 0$ for all $n$, so we can also write
$\widetilde H(x)$ in a form manifestly even in $h$,
$$\widetilde H(x) = H(x) + \frac{1}{6}h^2 \nabla H(x)^T \Big(I-\frac{1}{2}h^2 f'(x)^2\Big)^{-1}f'(x) f(x).$$

Now consider the case that $H(x)$ is cubic but not homogeneous. We extend it to a homogeneous function $\overline H(x_0,x_1,\dots,x_n)$ so 
that $\overline H(1,x_1,\dots,x_n) = H(x_1,\dots,x_n)$, and
extend $K$ to $\overline K$ by adding a zero initial row and column, so that $\dot x_0 = 0$.
The linear integral $x_0$ is conserved by Kahan's method,
and $\overline K \nabla\overline H(1,x) = K\nabla H(x)$, so the
modified Hamiltonian of Kahan's method for $\overline K \nabla\overline H(x)$ reduces to a modified Hamilton of Kahan's method for $K\nabla H(x)$ given
by the same formula (\ref{eq:Ht}) as in the homogeneous case.
\end{proof}

The Kahan map and its conserved quantity $\widetilde H$ are  rational functions of $x$ whose degrees are described in the following proposition. When $K$ has full rank, for $n=2$ (resp. $3,4$), $\widetilde H$ is degree 3 over degree 2 (resp. degree 5/2, degree 5/4) and the Kahan map is degree 2/2 (resp. degree 3/2, degree 4/4). In the planar case, Kahan's method gives a rational map with cubic invariant curves. We conjecture that the dynamics of the Kahan map in two dimensions  is related to the abelian group structure of elliptic curves (and in higher dimensions, to that of abelian varieties) as is the case for planar QRT maps \cite{duistermaat}.

\def\adj{\mathop{\rm adj}}

\begin{proposition}
Let $H$ be a cubic in $\R^n$ and let $K$ be a rank $k$ antisymmetric $n\times n$ matrix. 
\begin{enumerate}
\item
The degree of the denominator of $\widetilde H$ is at most $k$ and the degree of the numerator of $\widetilde H$ is at most $k+3$. When $k=n$ the degree of the numerator of $\widetilde H$ is at most $k+1$.
\item
The degree of the denominator of the Kahan map is at most $k$ and the degree of the numerator is at most $k+1$. When $k=n$ the degree of the numerator is at most $k$.
\end{enumerate}
\end{proposition}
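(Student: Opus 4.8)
The plan is to compute directly from the Rosenbrock form (\ref{eq:rosenbrock}), writing the map as $x' = x + h M^{-1} f$ with $M := I - \frac{h}{2}f'(x)$ and the invariant as $\widetilde H = H + \frac{h}{3}\nabla H^T M^{-1} f$. Since $H$ is cubic, $f = K\nabla H$ is quadratic and $f' = KH''$ is affine in $x$ and of rank at most $k$. First I would factor $K = PQ$ with $P \in \R^{n\times k}$, $Q\in \R^{k\times n}$ of full rank $k$, and set $R(x) := \frac{h}{2} Q H''(x)$, so that $\frac{h}{2} f' = PR$ and $M = I - PR$. Sylvester's determinant identity then gives $d := \det M = \det(I_k - RP)$; as $RP$ is $k\times k$ with affine entries, $\deg d \le k$, which is the denominator bound for both the map and $\widetilde H$. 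The Woodbury identity $M^{-1} = I + P(I_k - RP)^{-1} R$ yields $\adj M = d\,I + P\,\adj(I_k - RP)\,R$, so $\deg \adj M \le k$ as well.

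For the numerators, write the map as $x' = N/d$ with $N = x\,d + h\,\adj(M) f$. The naive estimate is $\deg N \le k+2$, and the crux is that the degree-$(k+2)$ part of $\adj(M) f$ vanishes. Denoting top-degree parts by a hat and setting $W := \widehat{RP}$ and $\hat g := Q\nabla H_3$, where $H_3$ is the leading cubic part of $H$, this top part factors as $P\big(\det(-W)\,\hat g + \adj(-W)\,W \hat g\big)$, which is zero because $\adj(-W)\,W = -\det(-W)\,I$. Hence $\deg N \le k+1$. Substituting into $\widetilde H\, d = H d + \frac{h}{3}\nabla H^T \adj(M) f$ then gives numerator degree at most $k+3$. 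This disposes of the general-$k$ claims for arbitrary (not necessarily homogeneous) cubic $H$, since only the top-degree parts enter the cancellation.

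For the full-rank case $k=n$, where $\hat{f'} = K H_3''$ is generically invertible, I would instead study the map at infinity: substitute $x \mapsto \lambda x$ and expand in $1/\lambda$. Euler's identities $H_3'' x = 2\nabla H_3$ and $\nabla H_3^T x = 3 H_3$ give $\hat{f'}^{-1}\hat f = \tfrac12 x$, so the leading $O(\lambda)$ term of $h M^{-1} f(\lambda x)$ is exactly $-\lambda x$ and cancels the $\lambda x$ in $x'$; thus $x'(\lambda x)$ tends to the finite limit $L = -\frac{2}{h}(H_3'')^{-1} K^{-1} x$, which is the assertion $\deg N \le n$. The same limit controls $\widetilde H$: in the homogeneous case $\widetilde H = \frac13 \nabla H^T x'$, so $\widetilde H(\lambda x) = \frac{\lambda^2}{3}\nabla H_3^T L + O(\lambda)$, and the $\lambda^2$ coefficient $\nabla H_3^T L = -\frac{1}{h} x^T K^{-1} x$ vanishes by antisymmetry of $K$. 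This forces $\widetilde H(\lambda x) = O(\lambda)$, i.e. numerator degree at most $n+1$. The non-homogeneous cubic case follows by carrying the expansion one order further, where the next coefficient again collapses through $x^T K^{-1} x = 0$ together with $\nabla H_2^T x = 2 H_2$.

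The main obstacle is precisely this full-rank improvement for $\widetilde H$. The denominator and the general-$k$ numerator bounds are formal consequences of the Sylvester and Woodbury identities and hold for any constant matrix $K$; by contrast the drop from $k+3$ to $k+1$ is genuinely special to antisymmetric $K$ and, unlike the conserved-quantity proof, cannot be reached by homogenisation, since adjoining a variable $x_0$ with $\dot x_0 = 0$ raises the dimension while leaving $k$ fixed and so destroys the full-rank hypothesis. One must therefore establish the two successive top-degree cancellations directly, the essential input each time being $x^T K^{-1} x = 0$. A minor bookkeeping point is that the at-infinity argument assumes $\hat{f'}$ invertible; this holds generically, and since the degree bounds are Zariski-closed conditions on the coefficients of $H$ they then hold for all cubic $H$.
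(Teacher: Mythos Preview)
Your proof is correct. For the general-$k$ bounds, your rank-factorisation with Sylvester and Woodbury is essentially a coordinate-free rendering of the paper's Darboux normal form argument: both reduce the relevant determinant and adjugate to $k\times k$ objects with affine entries. (A small simplification: the push-through identity $(I-PR)^{-1}P = P(I-RP)^{-1}$ gives $\adj(M)f = P\,\adj(I_k-RP)\,g$ directly, already of degree at most $k+1$, so your top-degree cancellation for the map numerator, while valid, can be bypassed.)

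The genuine divergence is in the $k=n$ improvement for $\widetilde H$. The paper rewrites $6\widetilde H(x) = x^T H''(x)\bigl(I-\tfrac{h}{2}KH''(x)\bigr)^{-1}x$ via the Euler identities, bounds the numerator by $k+2$ through Cramer's rule, and then eliminates the degree-$(k+2)$ part by invoking the \emph{evenness of $\widetilde H$ in $h$}, a fact established in the preceding proposition as a byproduct of $\widetilde H$ being an actual conserved quantity. Your at-infinity expansion achieves the same two-degree drop directly from $x^T K^{-1}x=0$ together with the Euler relations, without appealing to the conservation result; this makes your argument self-contained, at the price of a slightly longer explicit computation in the non-homogeneous case (your sketch is accurate: the $\lambda^2$ coefficient of $\widetilde H(\lambda x)$ does collapse, using $x^T K^{-1}x=0$, $x^T\nabla H_2 = 2H_2$, and $x^T H_2'' x = 2H_2$). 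You are also right that the homogenisation device used earlier in the paper is unavailable here, since adjoining a kernel variable to $K$ destroys the hypothesis $k=n$; the paper instead notes that the top-degree terms of the numerator depend only on the cubic part $H_3$ and then reuses the $h$-parity argument. Your Zariski-closure step to pass from generic $H_3$ (where $KH_3''(x)$ is invertible for generic $x$) to all cubic $H$ is sound.
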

\begin{proof}
\begin{enumerate}
\item
Because the method is linearly covariant we can assume without loss of generality that $K$ is in its Darboux normal form
$$K = \left(\matrix{L & 0 \cr  0 & 0 }\right),\quad L = \left(\matrix{0 & I \cr -I & 0}\right)$$
where $L$ is $k\times k$. Numbering the blocks of $K$ 1 and 2, 
the denominator of $\widetilde H$ is equal to 
$$\eqalign{
\det(I-\frac{1}{2}h f') &= \det(I-\frac{1}{2}h K H''(x)) \cr
&= \det\left(I - \frac{1}{2}h\left( \matrix{L & 0 \cr  0 & 0}\right)
\left(\matrix{H_{11}(x) & H_{12}(x) \cr H_{21}(x) & H_{22}(x)}\right)\right)\cr
&= \det\left( \matrix{I - \frac{1}{2}h L H_{11}(x) & -\frac{1}{2}h L H_{12}(x) \cr 0 & I }\right) \cr
&= \det(I-\frac{1}{2}h L H_{11}(x)).\cr
}$$
Each entry of the matrix $L H_{11}(x)$ is linear in $x$ so the final determinant has degree at most $k$.
Next, we write 
$$ \widetilde H(x) = \frac{H(x) \det(I-\frac{1}{2}h f'(x)) + \frac{1}{3}h (\nabla H(x))^T \adj(I-\frac{1}{2}h f'(x))f(x)}{\det(I-\frac{1}{2}h f'(x))}$$
where $\adj(A) = A^{-1} \det A$ is the adjoint of $A$.
The first term in the numerator has degree at most $k+3$, $\nabla H$ and $f$ have degree at most 2, and
$\adj(I-\frac{1}{2}h f'(x))f(x)=\adj(I-\frac{1}{2}h L H_{11}(x))L H_1(x)$ where each entry in $\adj(I-\frac{1}{2}h f'(x))$ is the determinant of a $(k-1)\times(k-1)$ matrix whose entries are linear in $x$. Hence the degree of the numerator of $\widetilde H(x)$ is at most $k+3$.

Finally we consider the case $k=n$. Since $k$ is even, $n$ must be even. First consider the case that $H$ is a homogeneous cubic. Then $\nabla H(x) = \frac{1}{2}H''(x) x$ and $H(x) = \frac{1}{6} x^T H''(x) x$. Thus in this case we have
$$\eqalign{
6 \widetilde H(x) &= 6 H(x) + 2 h \nabla H(x)^T \Big( I - \frac{1}{2}h f'(x)\Big)^{-1}f(x) \cr
&= x^T \left( H''(x) + H'' (x)(2 h^{-1} I - K H''(x))^{-1} K H''(x)\right) x \cr
&= x^T \Big( H''(x)(I-\frac{1}{2}h K H''(x))^{-1}\Big) x. \cr
}$$
Expanding the matrix inverse using Cramer's rule now shows that the degree of the numerator is at most $k+2$. The terms of degree $k+2$ come from terms in the minors of $I-\frac{1}{2}h K H''(x)$ of degree $k-1$ in $x$. Every $x_i$ in the matrix $I-\frac{1}{2}h K H''(x)$ is multiplied by $h$, thus these terms also have degree $k-1$ in $h$. However, $\widetilde H(x)$ is an even function of $h$ and so these terms must sum to zero. Thus the degree of the numerator is at most $k+1$. 

When $H(x)$ is a nonhomogeneous cubic, the terms of degree $k+3$ in the numerator of $\widetilde H(x)$ come from the cubic terms in $H(x)$ only, and thus vanish as in the homogeneous case. The terms of degree $k+2$ are odd in $h$ and hence vanish as before.
\item
The proof for the general case follows as above. For the case $k=n$, we first consider the case that $H$ is a homogeneous cubic. Then $f(x) = \frac{1}{2}f'(x)x$ and the Kahan map can be written
$$x' = x + \Big(I-\frac{h}{2}f'(x)\Big)^{-1}f(x) = \Big(I-\frac{h}{2}f'(x)\Big)^{-1}x.$$
Expanding the matrix inverse using Cramer's rule now shows that the degree of the numerator is at most $k+1$. In the nonhomogeneous case, the terms of degree $k+2$ in the numerator come from the cubic terms in $H$ only, and thus vanish as in the homogeneous case.
\end{enumerate}
\end{proof}

Examples suggest that there are no other values of $n$ or $k$ other than $n=k$ which lead to a reduction in degree. 

\begin{proposition}
Kahan's method preserves the measure 
$$ \frac{dx_1\wedge\dots\wedge dx_n}{\det(I - \frac{1}{2} h f'(x))}$$
for all cubic Hamiltonians on symplectic vector spaces and on Poisson vector spaces with constant Poisson structure.
\end{proposition}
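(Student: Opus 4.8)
The plan is to establish invariance of the density $\rho(x) = 1/\det(I - \frac{1}{2} h f'(x))$ directly from the change-of-variables formula. A diffeomorphism $x\mapsto x'$ preserves the measure $\rho(x)\,dx_1\wedge\dots\wedge dx_n$ exactly when $\rho(x) = \rho(x')\det(\partial x'/\partial x)$; with the stated $\rho$ this is equivalent to the Jacobian identity
$$\det\frac{\partial x'}{\partial x} = \frac{\det\big(I - \frac{1}{2} h f'(x')\big)}{\det\big(I - \frac{1}{2} h f'(x)\big)}.$$
So the whole statement reduces to computing the Jacobian of the Kahan map and matching it against this ratio.

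To compute the Jacobian I would differentiate the defining relation $x'-x = h\big(Q(x,x') + \frac{1}{2} B(x+x') + c\big)$ implicitly, regarding $x'$ as a function of $x$. Since $Q$ is symmetric and bilinear, polarisation together with $f'(x)v = 2Q(x,v)+Bv$ gives that the partial derivative of $Q(x,x')$ in $x$ equals $\frac{1}{2}(f'(x')-B)$ and that in $x'$ equals $\frac{1}{2}(f'(x)-B)$, while the derivative of the linear term $\frac{1}{2}B(x+x')$ contributes $\frac{1}{2}B$, cancelling the $-\frac{1}{2}B$ in each case. Collecting terms yields $\big(I - \frac{1}{2} h f'(x)\big)\,\partial x'/\partial x = I + \frac{1}{2} h f'(x')$, hence
$$\frac{\partial x'}{\partial x} = \Big(I - \frac{1}{2} h f'(x)\Big)^{-1}\Big(I + \frac{1}{2} h f'(x')\Big),\qquad \det\frac{\partial x'}{\partial x} = \frac{\det\big(I + \frac{1}{2} h f'(x')\big)}{\det\big(I - \frac{1}{2} h f'(x)\big)}.$$
This part is purely structural and uses nothing about the Hamiltonian form; it holds for any quadratic $f$.

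Comparing with the target ratio, the proof comes down to the single determinant identity $\det(I + \frac{1}{2} h f'(x')) = \det(I - \frac{1}{2} h f'(x'))$, and this is exactly the step where the Hamiltonian structure is indispensable and which I expect to be the crux. Writing $f'(x') = K H''(x')$ with $K$ antisymmetric and the Hessian $H''(x')$ symmetric, I would argue
$$\det\Big(I + \frac{1}{2} h K H''\Big) = \det\Big(I + \frac{1}{2} h (H'')^T K^T\Big) = \det\Big(I - \frac{1}{2} h H'' K\Big) = \det\Big(I - \frac{1}{2} h K H''\Big),$$
using invariance of the determinant under transposition for the first equality and Sylvester's identity $\det(I-MN)=\det(I-NM)$ for the last. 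For a general (non-Hamiltonian) quadratic field this identity fails, so the antisymmetry of $K$ is essential rather than incidental. Substituting back gives the required Jacobian identity and hence the invariance of the measure; no homogeneity reduction is needed, since $H''$ is symmetric for any cubic $H$.
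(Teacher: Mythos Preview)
Your proposal is correct and follows essentially the same route as the paper: compute the Jacobian of the map by implicit differentiation to obtain $\partial x'/\partial x = (I-\tfrac{h}{2}f'(x))^{-1}(I+\tfrac{h}{2}f'(x'))$, and then use transposition together with Sylvester's identity $\det(I+KS)=\det(I+SK)$ and $K^T=-K$, $S^T=S$ to conclude $\det(I+\tfrac{h}{2}f'(x'))=\det(I-\tfrac{h}{2}f'(x'))$. The only cosmetic difference is that the paper differentiates the whole one-parameter family $a f(x)+(1-2a)f(\bar x)+af(x')$ (using linearity of $f'$ to simplify $f'(\bar x)$) and then reads off the three special values $a=-\tfrac12,0,\tfrac12$ that give closed-form invariant densities, whereas you differentiate the polarised Kahan scheme directly; the underlying argument is the same.
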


\def\half{{\textstyle\frac{1}{2}}}
\def\quarter{{\textstyle\frac{1}{4}}}

\begin{proof}
Let $A=\frac{\partial x'}{\partial x}$ be the Jacobian of the Kahan mapping.
Differentiating the mapping (\ref{eq:symrk}) gives
$$\eqalign{
 \frac{A-I}{h} &= a f'(x) + \frac{1}{2}(1-2a) f'((x+x')/2)(I+A) + a f'(x') A\cr
 &= a f'(x) + \frac{1}{4} (1-2a) (f'(x) + f'(x'))(I+A) + a f'(x')A\cr
 }$$
because $f'(x)$ is linear in $x$. Solving for $A$,
$$ A  = \left(I - (\quarter + \half a)hf'(x') - (\quarter - \half a)hf'(x) \right)^{-1}\left(I + (\quarter+ \half a)hf'(x) + (\quarter - \half a)hf'(x') \right).
$$
Now $f'(x) = K H''(x) =: K S$ where $S$ is symmetric. 
From Sylvester's determinant theorem, $\det(I+KS) = \det(I+SK) = \det((I+SK)^T) = \det(I - K S)$.
The sum of such matrices has the same property, so
$$ \det A = \frac{\det(I-(\quarter+\half a)hf'(x)-(\quarter - \half a)hf'(x'))}
{\det(I-(\quarter+\half a)hf'(x')-(\quarter - \half a)hf'(x))}.
$$
This yields invariant measures $m(x)dx_1\wedge\dots\wedge dx_n$ in 3 cases:
\begin{enumerate}
\item when $a=-1/2$ (Kahan's method), $m(x) = 1/\det(I - \frac{1}{2}h f'(x))$;
\item when $a=0$ (midpoint rule), $m(x) = 1$;
\item when $a=1/2$ (trapezoidal rule), $m(x) = \det(I - \frac{1}{2}h f'(x))$.
\end{enumerate}
\end{proof}

By {\em integrable symplectic map} we adopt the definition of Bruschi et al. \cite{bruschi}: a symplectic map on a $2n$-dimensional symplectic manifold is integrable if it has $n$ functionally independent integrals in involution. We will say that leaf-preserving Poisson maps are integrable if the the map is integrable on each leaf.

\begin{corollary}
\label{cor:integrable}
Kahan's method yields an integrable mapping of the plane
when applied to any canonical Hamiltonian system in the plane
with cubic Hamiltonian.
Kahan's method yields an integrable mapping of $\R^3$
when applied to any Poisson system on $\R^3$ with constant
Poisson structure and any cubic Hamiltonian.
\end{corollary}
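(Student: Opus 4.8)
The plan is to assemble the two invariant structures already in hand --- the conserved modified Hamiltonian $\widetilde H$ of (\ref{eq:Ht}) and the invariant measure $dx_1\wedge\dots\wedge dx_n/\det(I-\frac12 h f'(x))$ --- and to exploit the special feature of low dimension that an invariant measure on a surface is the same thing as an invariant symplectic form.

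First I would treat the planar case directly. Here the phase space is two-dimensional, so in the Bruschi definition $2n=2$ and integrability requires a single functionally independent integral, the involution condition being vacuous since $\{\widetilde H,\widetilde H\}=0$. The invariant measure gives a nowhere-vanishing $2$-form $\omega = dx_1\wedge dx_2/\det(I-\frac12 h f'(x))$, and on a surface every such form is symplectic; the Kahan map, being measure-preserving and orientation-preserving (it is a near-identity map), preserves $\omega$ and is therefore symplectic with respect to it. The required integral is $\widetilde H$, which is functionally independent on an open dense set because $\widetilde H = H + O(h)$ with $H$ a non-constant cubic, so $d\widetilde H\neq 0$ generically. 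This yields integrability in the plane.

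Next I would reduce the $\R^3$ case to the planar one. A constant antisymmetric $3\times 3$ matrix has even rank, hence rank $0$ (trivial) or rank $2$; in the rank-$2$ case the one-dimensional kernel of $K$ furnishes a linear Casimir $C$, which, as a linear integral of $f$, is preserved exactly by the Kahan map, as by every Runge--Kutta method. The symplectic leaves are then the invariant affine planes $\{C=\mathrm{const}\}$. To obtain an invariant structure on a leaf I would induce an area form from the invariant volume form $\Omega=dx_1\wedge dx_2\wedge dx_3/\det(I-\frac12 h f'(x))$: choosing $\omega_c$ on $\{C=c\}$ with $\Omega = dC\wedge\omega_c$, preservation of both $\Omega$ and $C$ by the map forces preservation of $\omega_c$. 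On each two-dimensional leaf the map is then symplectic with respect to $\omega_c$ and has the integral $\widetilde H$ restricted to the leaf, so the planar argument applies and the map is integrable on each leaf, which is exactly the required notion for leaf-preserving Poisson maps.

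The main obstacle is conceptual rather than computational: the Kahan map is \emph{not} symplectic with respect to the canonical or Poisson structure --- recall from Proposition \ref{prop2} that it is only conjugate to symplectic up to order $4$. The step needing the most care is therefore the observation that in the relevant two-dimensional setting symplecticity is equivalent to preservation of an area form, so that the invariant measure upgrades to an invariant symplectic form and makes the Bruschi definition applicable. One should also check that the induced leaf form $\omega_c$ is well defined and nondegenerate and that $\widetilde H$ restricts to a non-constant function on the generic leaf; both follow from the rank-$2$ normal form of $K$ and the limit $\widetilde H\to H$ as $h\to 0$.
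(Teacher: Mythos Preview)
Your proposal is correct and follows essentially the same route as the paper: invoke the invariant measure and the modified Hamiltonian $\widetilde H$ in the plane, and in $\R^3$ use the linear Casimir of the rank-$2$ constant Poisson structure (preserved by any Runge--Kutta method) to reduce to the planar situation on each leaf. Your write-up is considerably more explicit than the paper's two-sentence proof --- in particular, you spell out why an invariant area form in dimension two furnishes the symplectic structure needed for the Bruschi definition, and you give the explicit induction $\Omega = dC\wedge\omega_c$ of the leafwise form --- but the underlying argument is the same.
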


\begin{proof}
A measure and a first integral are sufficient for integrability in the plane. 
The odd-dimensional case with constant $K$ has a linear Casimir which is conserved by the method, reducing the situation in this case to two dimensions on each level set of the 
Casimir.
\end{proof}

\begin{corollary}
When $n=2$ and $H$ is a homogeneous cubic, $\widetilde H(x) = H(x)/\det(I-\frac{1}{2}h f'(x))$ and Kahan's method
preserves the $h$-independent measure $(dx_1\wedge dx_2)/H(x)$.
\end{corollary}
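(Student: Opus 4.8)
The plan is to pin down the modified Hamiltonian $\widetilde H$ in this special case and then read the invariant measure off it directly. Since $H$ is a homogeneous cubic, Euler's identity gives $\nabla H(x)=\half H''(x)x$ and $6H(x)=x^TH''(x)x$, and correspondingly $f(x)=\half f'(x)x$. I would start from the homogeneous formula already obtained in the proof of the preceding degree proposition, namely $6\widetilde H(x)=x^TH''(x)\bigl(I-\half h\,KH''(x)\bigr)^{-1}x$. Writing $M:=I-\half h f'(x)=I-\half h\,KH''(x)$ and using $M^{-1}=\adj(M)/\det M$, the claimed identity $\widetilde H=H/\det M$ is equivalent to the vanishing of the correction $x^TH''(x)\bigl(\adj(M)-I\bigr)x$.

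The key simplification is that everything here is $2\times2$. First, $\mathrm{tr}\bigl(KH''(x)\bigr)=0$ because $K$ is antisymmetric and $H''(x)$ symmetric, so $\mathrm{tr}\,M=2$; the Cayley--Hamilton identity $\adj(M)=(\mathrm{tr}\,M)\,I-M$, valid for $2\times2$ matrices, then gives $\adj(M)-I=I-M=\half h\,KH''(x)$. Substituting, the correction becomes $\tfrac{h}{2}\,x^TH''(x)\,K\,H''(x)\,x$, which vanishes identically since $H''(x)\,K\,H''(x)$ is of the form $S K S$ with $S$ symmetric and $K$ antisymmetric, hence antisymmetric, and an antisymmetric matrix defines a zero quadratic form. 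This establishes $\widetilde H(x)=H(x)/\det\bigl(I-\half h f'(x)\bigr)$.

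For the measure I would combine two facts already in hand: the earlier measure proposition shows that Kahan's method preserves $\omega:=(dx_1\wedge dx_2)/\det\bigl(I-\half h f'(x)\bigr)$, while the proposition on the conserved modified Hamiltonian shows that $\widetilde H$ is an invariant of the map $\phi$. Using the identity just proved, $\det\bigl(I-\half h f'(x)\bigr)=H(x)/\widetilde H(x)$, so $(dx_1\wedge dx_2)/H(x)=\widetilde H(x)^{-1}\,\omega$. Since $\widetilde H$ is invariant, so is $\widetilde H^{-1}$, and therefore $\phi^*\bigl(\widetilde H^{-1}\omega\bigr)=(\widetilde H^{-1}\circ\phi)\,\phi^*\omega=\widetilde H^{-1}\omega$; hence $(dx_1\wedge dx_2)/H(x)$ is preserved, and it is manifestly independent of $h$.

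The main obstacle is really only the first step, the exact cancellation giving $\widetilde H=H/\det M$, since this is where the dimension $n=2$ enters essentially, through $\mathrm{tr}\,M=2$ and the $2\times2$ adjugate identity; the clean quotient form is not expected to persist in higher dimensions. Once that identity is secured, invariance of the $h$-independent measure is a formal consequence of the principle that the reciprocal of a conserved quantity times an invariant measure is again an invariant measure, requiring no further computation.
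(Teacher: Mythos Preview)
Your proof is correct and follows essentially the same route as the paper: both start from the homogeneous identity $6\widetilde H(x)=x^TH''(x)M^{-1}x$, clear the denominator via the adjugate, exploit the $2\times2$ structure to reduce the correction term to an antisymmetric quadratic form that vanishes, and then obtain the $h$-independent measure by multiplying the known invariant measure by the conserved quantity $1/\widetilde H$. Your use of $\adj(M)=(\mathrm{tr}\,M)\,I-M$ together with $\mathrm{tr}\,M=2$ is a slightly cleaner variant of the paper's identity $\adj(I-A)=I-\adj(A)$, but the argument is otherwise identical.
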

\begin{proof}
We have
$$\eqalign{
6 \det(I-\frac{1}{2}h f'(x)) \widetilde H(x)  &= \det(I-\frac{1}{2}h f'(x))  x^T \Big( H''(x)(I-\frac{1}{2}h K H''(x))^{-1}\Big) x \cr
&= x^T \Big( H''(x) \adj(I-\frac{1}{2}h K H''(x))\Big)x \cr
&=x^T \Big(H''(x) (I - \adj(\frac{1}{2}h K H''(x))\Big)x \quad \hbox{\rm (because $n=2$)}\cr
&=x^T \Big(H''(x) - \frac{1}{2}hH''(x) (K H''(x))^{-1}\det(K H''(x))\Big)x\cr
&=x^T \Big(H''(x) - \frac{1}{2}h K \det(K H''(x))\Big)x\cr
&=x^T H''(x) x \quad  \hbox{\rm (because $K^T=-K$)}\cr
&= 6 H(x).\cr
}$$
Any map that preserves a measure $\mu(x)$ and an integral $I(x)$ also preserves the measure $I(x)\mu(x)$. Taking $\mu = dx_1\wedge dx_2/\det(I-\frac{1}{2} h f'(x))$ and $I(x) = 1/\widetilde H(x)$ gives the $h$-independent measure $(dx_1\wedge dx_2)/H(x)$.
\end{proof}

\begin{figure}[pt]
\begin{center}
\begin{overpic}[width=6cm]{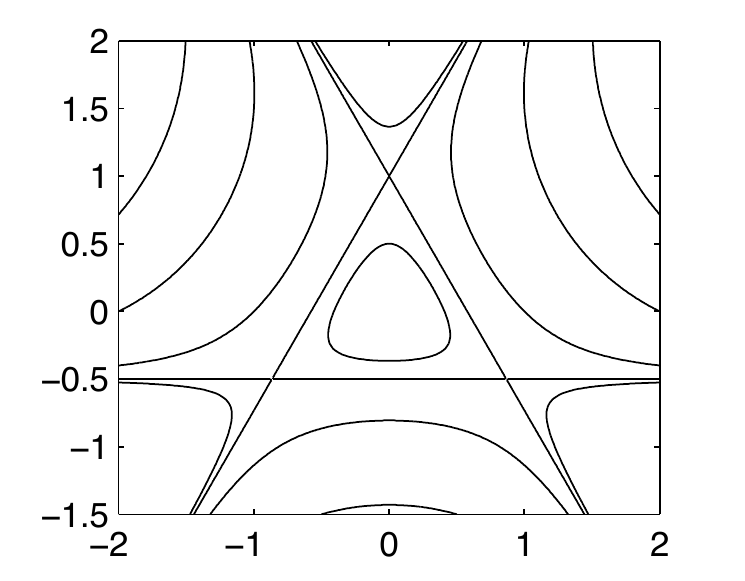}
\put(75,2){$q$}\put(0,65){$p$}\end{overpic}
\begin{overpic}[width=6cm]{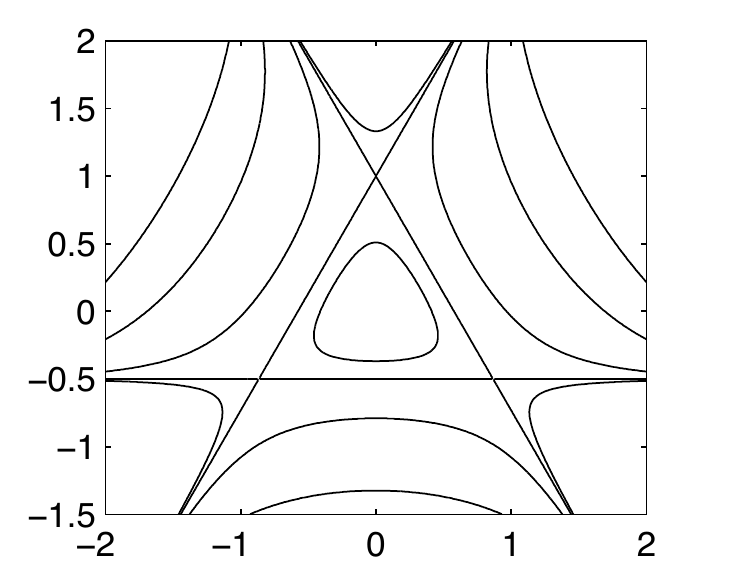}
\put(75,2){$q$}\put(0,65){$p$}\end{overpic}
\begin{overpic}[width=6cm]{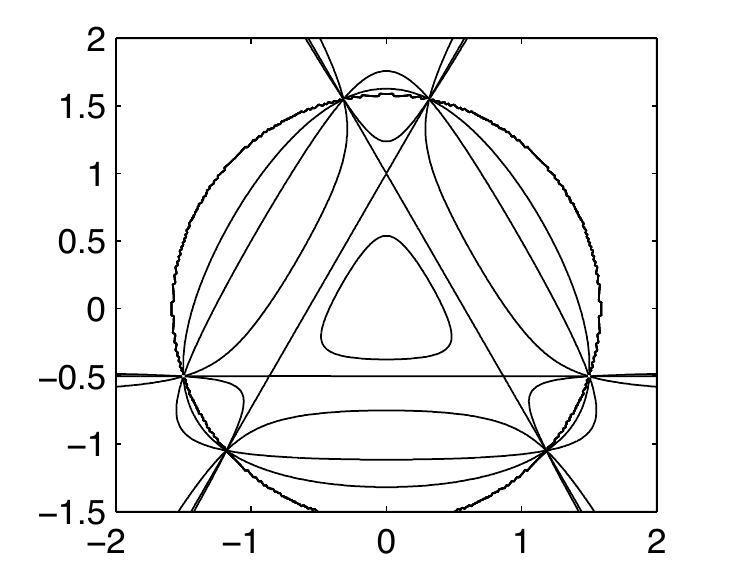}
\put(75,2){$q$}\put(0,65){$p$}\end{overpic}
\begin{overpic}[width=6cm]{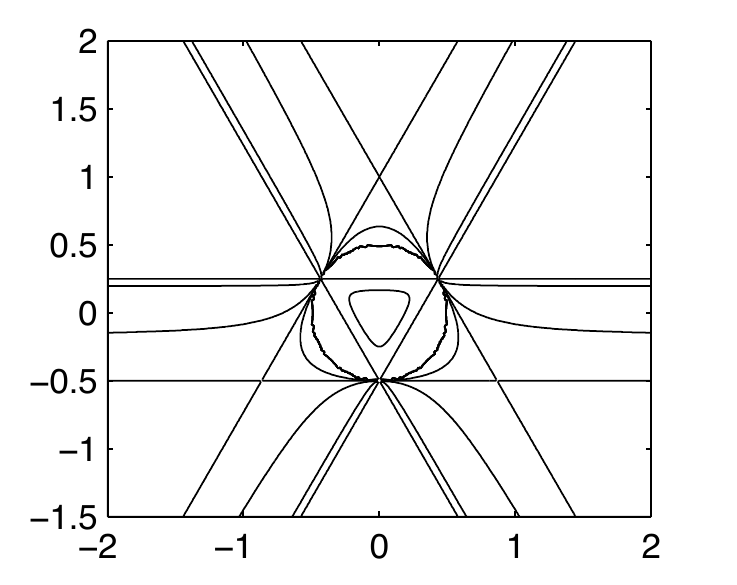}
\put(75,2){$q$}\put(0,65){$p$}\end{overpic}
\caption{\label{fig:henon}
Top left: Level sets of $H = \frac{1}{2}(q^2+p^2) + q^2 p - \frac{1}{3}p^3$ (the so-called H\'enon--Heiles potential). Same level sets of the conserved quantity $\widetilde H$ 
of Kahan's method for $h=1/3$ (top right); $h=2/3$ (bottom left) (the jagged circle $q^2+p^2=\frac{1}{4}+\frac{1}{h^2}=1.58$ indicates $\tilde H = \infty$, on which initial conditions are mapped to infinity---for $h=1/3$ the circle has radius 3.04 and is out of view); and $h\to\infty$ (bottom right). Note that Kahan's method preserves the 3-fold discrete symmetry of $H$, because as a Runge--Kutta method it preserves all affine symmetries.
}
\end{center}
\end{figure}

\begin{figure}[pt]
\begin{center}
\begin{overpic}[width=6cm]{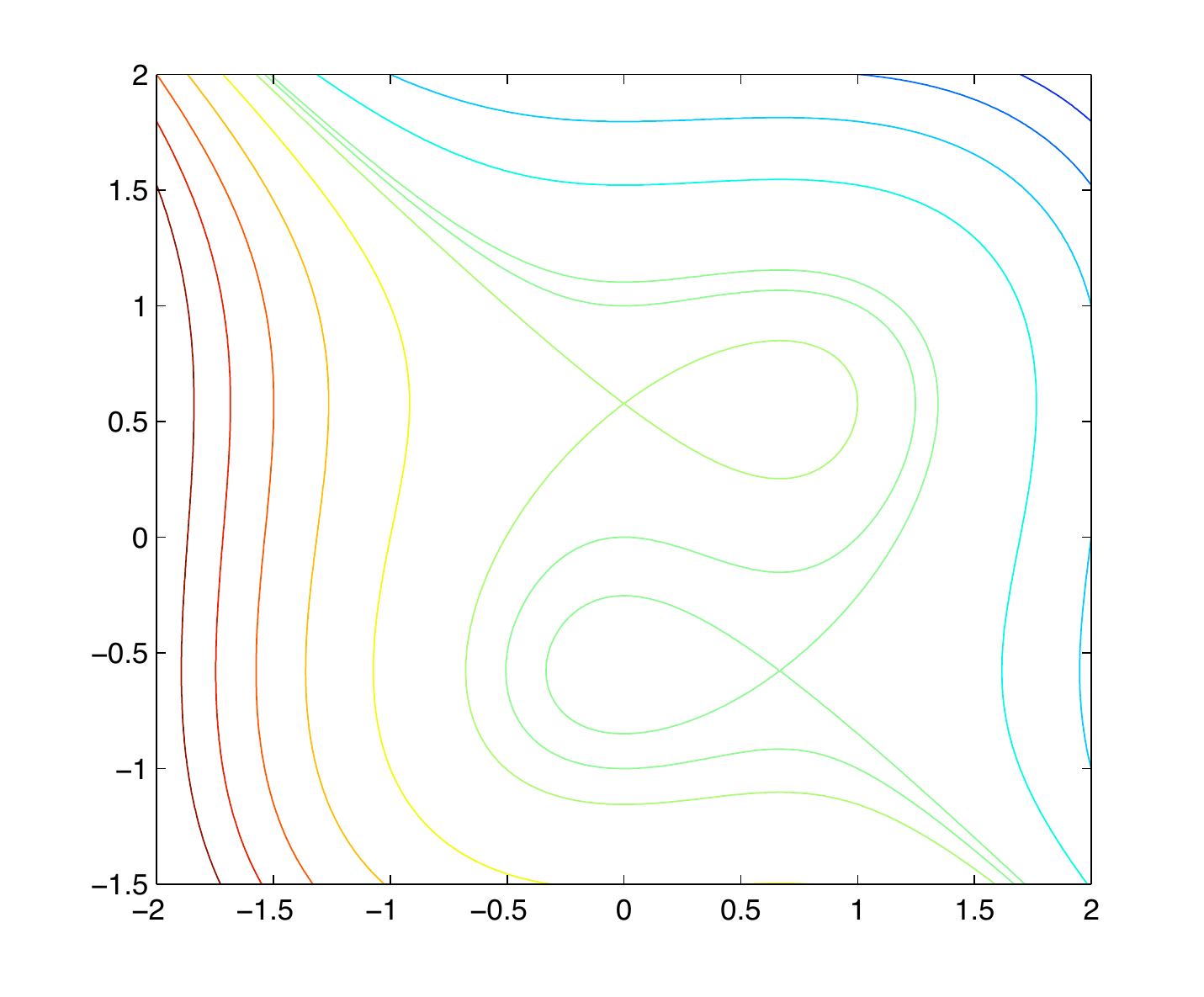}
\put(75,2){$q$}\put(0,65){$p$}\end{overpic}
\begin{overpic}[width=6cm]{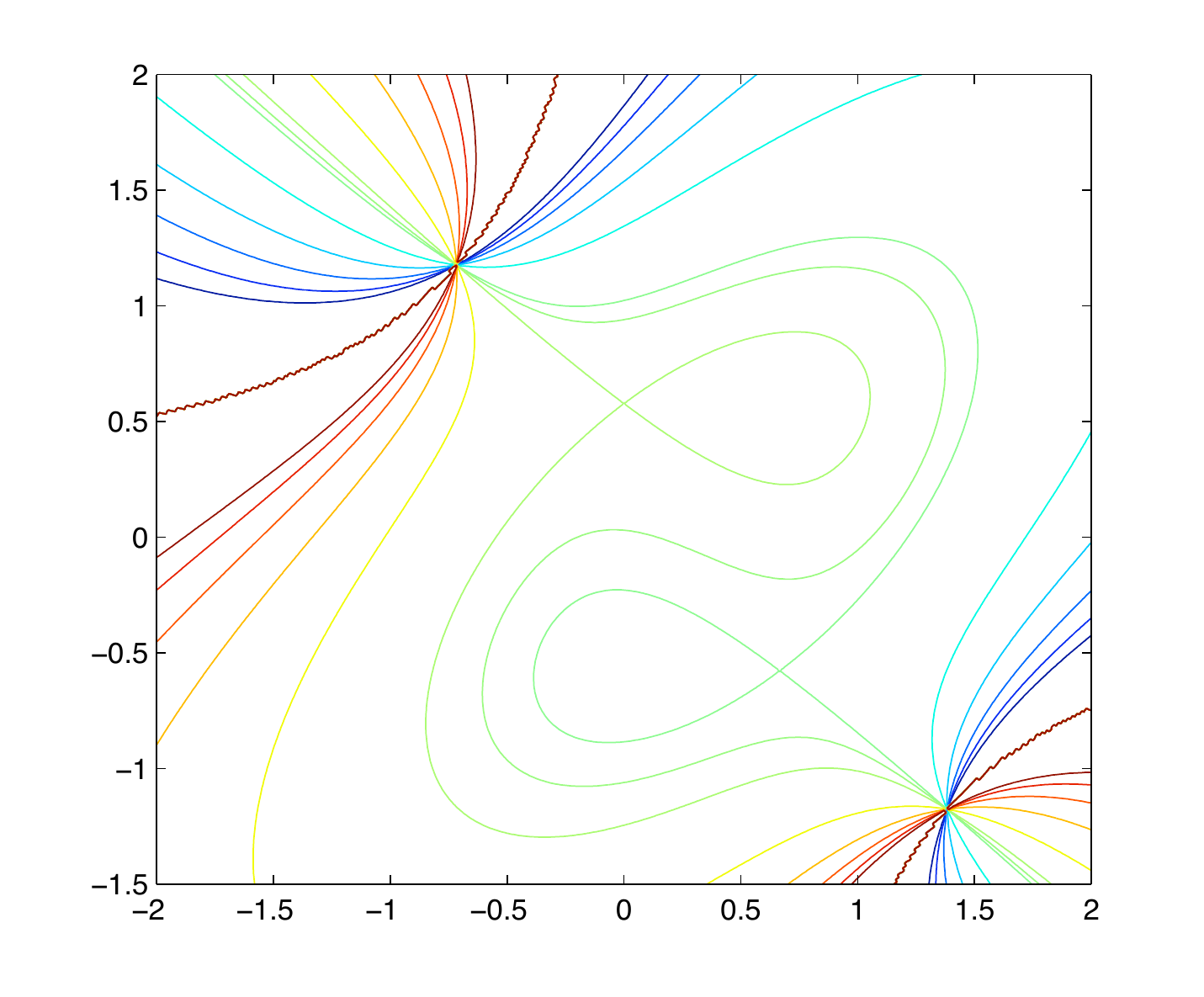}
\put(75,2){$q$}\put(0,65){$p$}\end{overpic}
\caption{\label{fig:nons}
Left: Level sets of $H=p-p^3 + q^2 - q^3$. 
Right: 
Level sets of the conserved quantity $\widetilde H$ 
of Kahan's method for $h=0.3$. Later numerical experiments use initial conditions
inside the separatrix attached to $(q,p)=(0,1/\sqrt{3})$.
}
\end{center}
\end{figure}

\begin{figure}[pt]
\begin{center}
\includegraphics{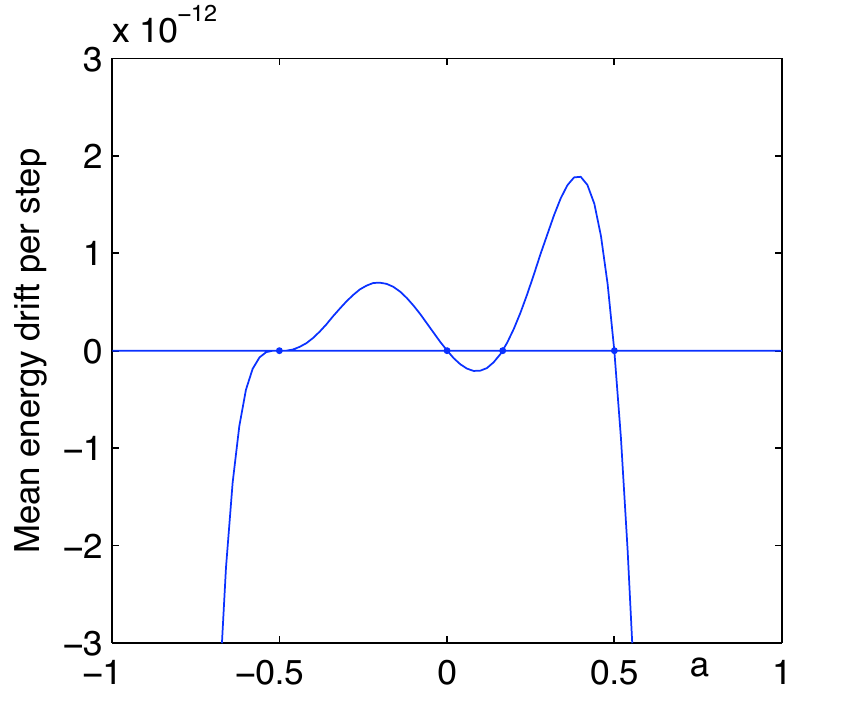}
\end{center}
\caption{\label{fig:family}Measured rate of energy drift for $H=p-p^3+q^2-q^3$ for Runge--Kutta methods 
$x'=x+h(a f(x) + (1-2a) f((x+x')/2) + a f(x'))$, varying the parameter $a$. The step size
is $h=0.3$ and the initial condition is $q=0.323$, $p=1/\sqrt{3}$. All methods have
an approximate modified energy up to  $h^4$. The energy drift is measured by
fitting a straight line to this modified energy over $2\times 10^6$ time steps. Only the four
methods identified by the analysis ($a=-1/2$, Kahan; $a=0$, midpoint; $a=1/6$, Simpson; and
$a=1/2$, trapezoidal) show no energy drift by this measure.
}
\end{figure}

\begin{figure}[pt]
\begin{center}
\includegraphics[width=9cm]{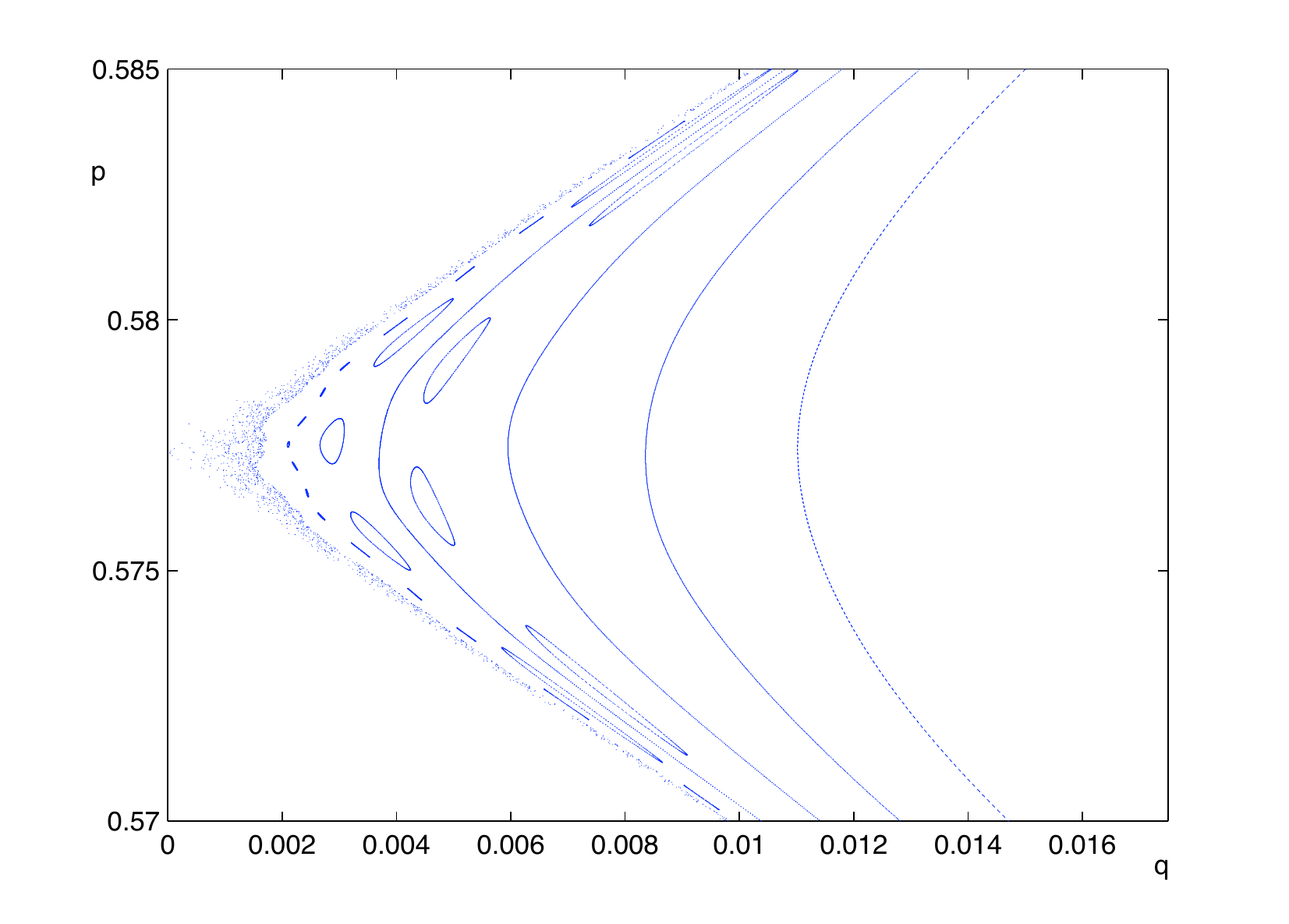}
\end{center}
\caption{\label{fig:midpoint}Portion of the phase portrait of the midpoint rule with step size $h=0.3$ applied to
$H = p-p^3+q^2-q^3$. The observed chaotic bands
and island chains indicate that it does not have a conserved quantity.
}
\end{figure}

\begin{figure}[pt]
\begin{center}
\includegraphics[width=9cm]{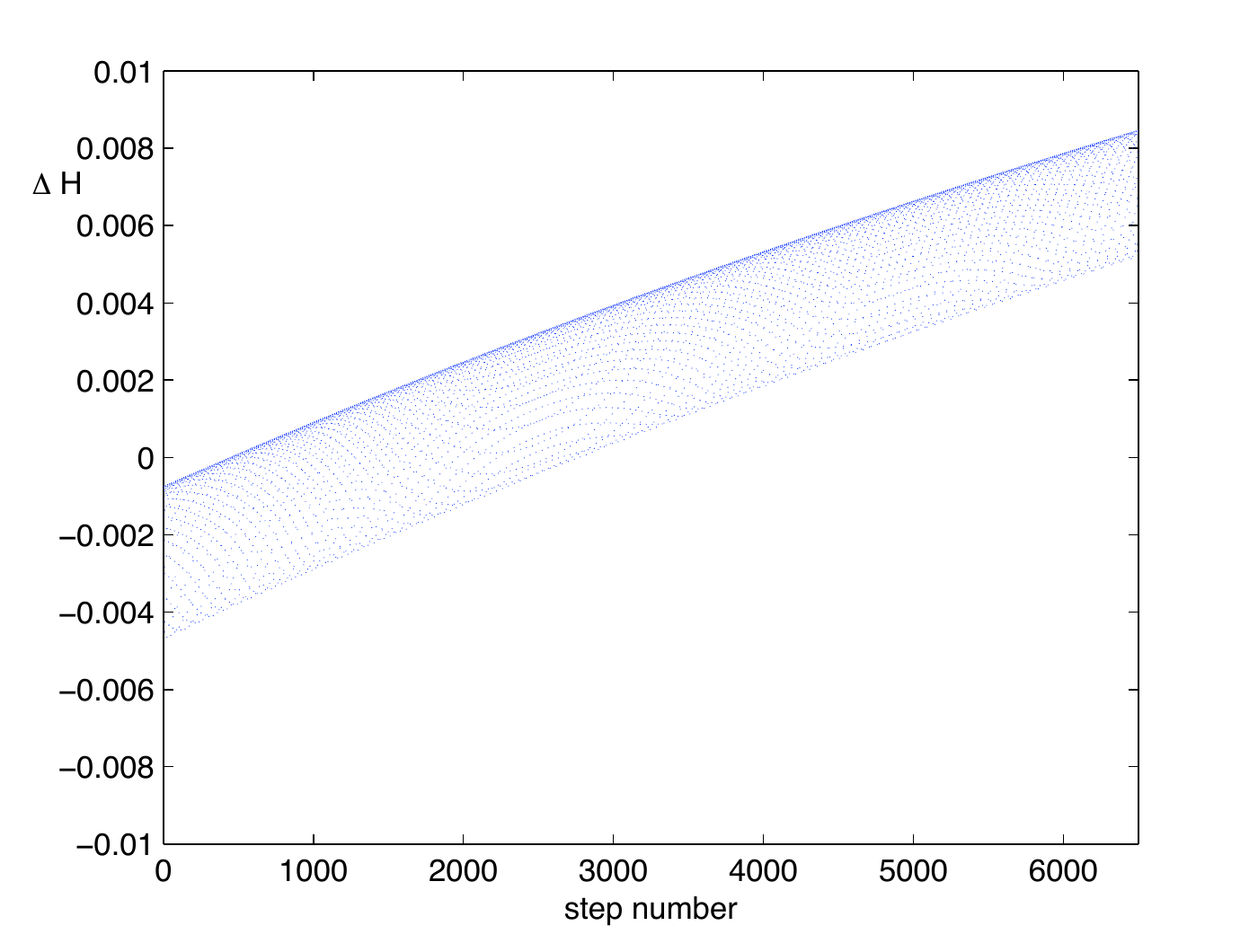}
\end{center}
\caption{\label{fig:suzuki}Suzuki's 3-stage, 4th-order composition applied
to Kahan's method shows a comparatively rapid energy
drift, indicating that there is no conserved quantity.
Here $H=p-p^3+q^2-q^3$, $h=0.2$, and $(q_0,p_0)=(0.323,1/\sqrt{3})$.
}
\end{figure}

\section{Discussion}

Level sets of $\widetilde H$ are shown in Figure \ref{fig:henon} for $H = \frac{1}{2}(q^2+p^2) + q^2 p - \frac{1}{3}p^3$. Notice that the separatrices persist (and are unchanged) for all $h$, but that the singular set $q^2 + p^2 = \frac{1}{4} + \frac{1}{h^2}$ moves in from infinity as $h$ increases and alters the topology of the level sets. For $h<\sqrt{4/3}$ the topology of the
bounded orbits is unaltered. 

The bounded orbits of Figure \ref{fig:henon} are symmetric, so for the following numerical experiments we used $H=p-p^3 + q^2 - q^3$, which has bounded, nonsymmetric orbits,  an elliptic fixed point at $(q,p)=(2/3,1/\sqrt{3})$, and 
a separatrix meeting $(q,p)=(0,1/\sqrt{3})$. Level sets of $\widetilde H$ for this case are shown in Figure \ref{fig:nons}.

Numerical experiments strongly indicate that the following observations hold. 
\begin{enumerate}
\item No other method of the family (\ref{eq:symrk}) has
a modified Hamiltonian when $H$ is cubic, apart from the known
cases $a=0$, $\pm \frac{1}{2}$, and $\frac{1}{6}$ (see Figure \ref{fig:family}).
\item The midpoint and trapezoidal rules do not have a
first integral for all cubic $H$ (even though they do 
have a formal invariant close to $H$) (see Figure \ref{fig:midpoint}).
\item Simpson's method is not measure-preserving
for all cubic $H$. (For $H=p-p^3+q^2-q^3$, a numerical
calculation finds eigenvalues $1, \lambda$ of periodic
orbits, with $\lambda\ne 1$, contradicting measure preservation.)
\item Kahan's method does not preserve any symplectic form in dimension $\ge 4$ for all cubic $H$. (A numerical calculation of periodic points finds eigenvalues that do not occur in $\lambda$, $1/\lambda$ pairs. Proposition \ref{prop2} establishes this for a limited class of symplectic forms.)
\item Compositions of Kahan's method with different
step sizes do not have a modified Hamiltonian when $H$ is cubic
(see Figure \ref{fig:suzuki}).
\end{enumerate}

Our results are significant and novel for the study of both the integrability of the mappings produced by Kahan's method and for the study of the geometric properties of Runge--Kutta methods:
\begin{itemize}
\item First, our results explain the integrability of the map obtained when Kahan's method is applied to {\em some} of the examples
of \cite{pe-pf-su}: their Eq. (4.2) ($H=y^2/2 - 2 x^3 + \alpha x$); Eq. (5.4) ($H = y(3x^2-y)$); Eq. (8.1) (Volterra chain in
$\R^3$, $H = x_1 x_2 x_3$, constant $K$, integral $H_2$ in Eq. (8.6) is a function of our $\widetilde H$ and the Casimir
$x_1+x_2+x_3$); Eq. (9.1) (Dressing chain in $\R^3$, $H = (x_1+x_2)(x_2+x_3)(x_3+x_1)-\sum_i \alpha_i x_i$, constant $K$). Our results explain the invariant measure and cubic integral of their Eq. (11.1) (three wave system in ${\mathbb C}^3$, $H=z_1z_2z_3+\bar z_1\bar z_2\bar z_3$), the invariant measure for the family of systems in their Prop. 1, and the linear integrals throughout \cite{pe-pf-su}. 
\item Second, our results (e.g. Corollary \ref{cor:integrable}) systematically produce new integrable cases of Kahan's method.
\item Third, we have shown that Kahan's method in dimension 4 and greater provides examples
of maps with nonlinear integrals and conserved measures {\em unrelated} (in general) to integrability or obvious symmetries, again 
a novel feature. For example, 
our results imply that Kahan's application of the method to the Korteweg--de Vries equation in \cite{ka-li} preserves a measure and a modified energy (but the higher order compositions of the method in \cite{ka-li} probably do not).
\item Fourth, we have shown that Kahan's method has novel properties previously unknown amongst Runge--Kutta methods, indeed amongst all B-series. It is known that B-series methods
cannot conserve the measure $dx_1\wedge\dots\wedge dx_n$
even for linear vector fields \cite{is-qu-ts}; Kahan's method
circumvents this by conserving a modified measure. 
It is a novel conjugate-to-energy preserving method for cubic $H$.
In the plane, it is also conjugate to symplectic. Thus, while
no conjugate to symplectic methods are known that are also
energy preserving in general, here we have one that preserves
at least a modified energy, and preserves it exactly (not
merely as a formal invariant). 
\end{itemize}

On the other hand, there are open questions in all of these areas. While it was already suggested in \cite{pe-pf-su} that there could be an underlying `integrability mechanism' unifying the integrable cases, here we have unified only some of these. In addition the hoped-for unification should now be extended to include non-integrable cases preserving a measure and/or some integrals as well. On the numerical side, it is not known precisely which Runge--Kutta or B-series methods share the properties of Kahan's method, if any are higher order integrators, or if any are conservative for nonquadratic (e.g. other polynomial) vector fields.

\section*{Acknowledgements}
This research was supported by a  \href{http://wiki.math.ntnu.no/crisp}{Marie Curie International Research Staff Exchange Scheme Fellowship} within the \href{http://cordis.europa.eu/fp7/home_en.html}{7th European Community Framework Programme}, and by the Marsden Fund of the Royal Society of New Zealand and the Australian Research Council. We would like to thank the referees for their careful reading of the manuscript.

\section*{References}


\begin{thebibliography}{99}

\bibitem{bruschi} Bruschi M, Ragnisco O, Santini PM, and Gui-Zhang T 1991, Integrable symplectic maps,
{\em Physica D \bf 49} 273--294.

\bibitem{cetal} Celledoni E, McLachlan RI, McLaren DI, Owren B, Quispel GRW, and Wright W 2009, Energy-preserving Runge--Kutta methods, {\em Mathematical Modelling and Numerical Analysis \bf 43} 645--649.

\bibitem{duistermaat} Duistermaat JJ 2010, {\em Discrete Integrable Systems: QRT Maps and Elliptic Surfaces}, Springer, Heidelberg.

\bibitem{ha-lu-wa} Hairer E, Lubich C, and Wanner G 2006, {\it Geometric Numerical Integration: 
Structure-Preserving Algorithms for Ordinary Differential Equations}, 2nd ed., Springer, Berlin.

\bibitem{hi-ki} Hirota R and Kimura K 2000, Discretization of the Euler top, {\em J. Phys. Soc. Jap. \bf 69} 627--630. 

\bibitem{ho-pe} Hone ANW and Petrera M 2009, Three-dimensional discrete systems of Hirota--Kimura type and deformed Lie--Poisson algebras, {\em J. Geom. Mech.  \bf 1} 55--85.

\bibitem{is-qu-ts} Iserles A, Quispel GRW, and Tse PSP 2007, B-series methods cannot be volume-preserving,
{\em BIT \bf 47}  351--378.

\bibitem{kahan} Kahan W 1993,  Unconventional numerical methods for trajectory calculations, Unpublished lecture 
notes.

\bibitem{ka-li} Kahan W and Li R-C 1997, Unconventional schemes for a class of ordinary differential equations---%
with applications to the Korteweg--de Vries equation, {\em J. Comput. Phys. \bf 134} 316--331. 

\bibitem{pe-pf-su}Petrera M, Pfadler A, and Suris YB 2011, On integrability
of Hirota--Kimura type discretizations, {\em Regular and Chaotic Dynamics \bf 16}  245--289.


\end{thebibliography}
\end{document}